\newtheorem*{acknowledgements}{Acknowledgements}
\newtheorem*{theorem*}{Theorem}
\newtheorem{theorem}{Theorem}
\newtheorem{corollary}{Corollary}
\newtheorem{proposition}[corollary]{Proposition}
\theoremstyle{definition}
\newtheorem{example}{Example}
\newtheorem{remark}[example]{Remark}
\numberwithin{equation}{section}
\let\oldsqrt\sqrt
\def\sqrt{\mathpalette\DHLhksqrt}
\def\DHLhksqrt#1#2{%
\setbox0=\hbox{$#1\oldsqrt{#2\,}$}\dimen0=\ht0
\advance\dimen0-0.2\ht0
\setbox2=\hbox{\vrule height\ht0 depth -\dimen0}%
{\box0\lower0.4pt\box2}}
\DeclareFontFamily{U}{mathx}{\hyphenchar\font45}
\DeclareFontShape{U}{mathx}{m}{n}{
      <5> <6> <7> <8> <9> <10>
      <10.95> <12> <14.4> <17.28> <20.74> <24.88>
      mathx10
      }{}
\DeclareSymbolFont{mathx}{U}{mathx}{m}{n}
\DeclareMathAccent{\widecheck}{0}{mathx}{"71}
\renewcommand{\tilde}[1]{\widetilde{#1}}
\renewcommand{\bar}{\overline}
\newcommand\eps\varepsilon
\renewcommand\epsilon\varepsilon
\newcommand{\abs}[1]{\left\lvert #1 \right\rvert}
\newcommand\inner[1]{\langle #1 \rangle}
\newcommand\paperintro%
\newcommand\paperbody%
\newcommand\bbG{\mathbb{G}}
\newcommand\bbH{\mathbb{H}}
\newcommand\bbI{\mathbb{I}}
\newcommand\bbN{\mathbb{N}}
\newcommand\bbR{\mathbb{R}}
\newcommand\bbS{\mathbb{S}}
\newcommand\cF{\mathcal{F}}
\newcommand\cN{\mathcal{N}}
\newcommand\cW{\mathcal{W}}
\newcommand\mf[1]{\mathfrak{ #1}}
\DeclareMathAlphabet{\mathpzc}{OT1}{pzc}{m}{it}
\newcommand{\sbs}{\subset}
\newcommand{\csob}{C^{\mathrm{Sob}}}
\newcommand{\cfk}{C^{\mathrm{FK}}}
\newcommand{\tgamma}[1]{ \tilde{\gamma}_{#1} }
\def\@tocline#1#2#3#4#5#6#7{\relax
  \ifnum #1>\c@tocdepth 
  \else
    \par \addpenalty\@secpenalty\addvspace{#2}%
    \begingroup \hyphenpenalty\@M
    \@ifempty{#4}{%
      \@tempdima\csname r@tocindent\number#1\endcsname\relax
    }{%
      \@tempdima#4\relax
    }%
    \parindent\z@ \leftskip#3\relax \advance\leftskip\@tempdima\relax
    \rightskip\@pnumwidth plus4em \parfillskip-\@pnumwidth
    #5\leavevmode\hskip-\@tempdima
      \ifcase #1
       \or\or \hskip 1em \or \hskip 2em \else \hskip 3em \fi%
      #6\nobreak\relax
    \hfill\hbox to\@pnumwidth{\@tocpagenum{#7}}\par
    \nobreak
    \endgroup
  \fi}
\def\annu#1{_{%
  \vbox{\hrule height .2pt 
    \kern 1pt 
    \hbox{$\scriptstyle {#1}\kern 1pt$}%
  }\kern-.05pt 
  \vrule width .2pt 
}}
\def\keywords{\xdef\@thefnmark{}\@footnotetext}
\title{A note on the Pleijel theorem for $H$-type groups}
\author{Yaozhong Qiu}
\address{UPL, Univ. Paris Nanterre, CNRS, F92000 Nanterre France}
\email{yqiu@parisnanterre.fr}
\begin{document}
\begin{abstract}
We continue the program initiated by \cite{frank2024courant} and show that the Pleijel theorem holds unconditionally on all but four $H$-type groups. 
\end{abstract}

\keywords{2020 \emph{Mathematics Subject Classification.} Primary 53C17, 58C40}
\keywords{\emph{Keywords.} Pleijel theorem, nodal domains, $H$-type group}

\maketitle

\section{Introduction and main result}
In this note, we continue the study of the Pleijel theorem for subriemannian laplacians defined on the product of a Heisenberg group $\bbH_n$ and $\bbR^k$ initiated by Frank and Helffer \cite{frank2024courant}, and extend their argument to all but four $H$-type groups. We will follow closely their proof which relates the asymptotic behaviour of the number of nodal domains to the sharp constant of the $L^2$-Sobolev inequality and the Weyl constant appearing in the asymptotics of the eigenvalue counting function. 

Let $\bbG$ be a step two stratified Lie group with Lie algebra $\mf{g}$, meaning $\mf{g}$ admits the stratification $\mf{g} = V_1 \oplus [V_1, V_1]$ for some linear subspace $V_1 \sbs \mf{g}$. We say $\bbG$ is a $H$-type group if $\mf{g}$ is equipped with an inner product $\inner{\cdot, \cdot}$ with the property if $\mf{z}$ denotes the centre of $\mf{g}$, then 
\begin{enumerate}
    \item $[\mf{g}^\perp, \mf{g}^\perp] = \mf{g}$ and
    \item for every fixed $z \in \mf{g}$, the map $J_z: \mf{z}^\perp \mapsto \mf{z}^\perp$ defined by 
\[ \inner{J_z(v), w} = \inner{z, [v, w]} \] for all $w \in \mf{z}^\perp$ is orthogonal for all $\inner{z, z} = 1$. 
\end{enumerate}
If $n = \dim(\mf{z}^\perp)$ and $m = \dim(\mf{z})$, in which case the stratification is $\mf{g} = \mf{z}^\perp \oplus \mf{z}$, then $\bbG$ can be equipped with a system of coordinates $(x, t) \in \bbR^n_x \times \bbR^m_t$ equipped with the group law
\[ (x, t) \circ (\xi, \tau) = \left(x + \xi, t_1 + \tau_1 + \frac{1}{2}\inner{U^{(1)}x, \xi}, \cdots, t_n + \tau_n + \frac{1}{2}\inner{U^{(n)}x, \xi}\right) \]
for a collection $U^{(1)}, \cdots, U^{(m)}$ of $n \times n$ skew-symmetric, orthogonal, and anticommuting matrices, see for instance \cite[Theorem~18.2.1]{bonfiglioli2007stratified}. These conditions imply the $U^{(1)}, \cdots, U^{(m)}$ are linearly independent and that $n$ is even. Consequently in the sequel we shall always write $2n = \dim(\mf{z}^\perp)$ for some $n \in \bbN$ and $\bbG \cong \bbR^{2n}_x \times \bbR^m_t$. Associated with the group law is a canonical family of vector fields $X_1, \cdots, X_{2n}$. The canonical negative sublaplacian on $\bbG$ is given by
\begin{equation}\label{htype-sublaplacian}
    \Delta^\bbG = -\sum_{i=1}^{2n} X_i^*X_i = \Delta^{\bbR^{2n}}_x + \frac{1}{4}\abs{x}^2\Delta^{\bbR^m}_t + \sum_{j=1}^m \inner{U^{(j)}x, \nabla_x}\partial_{t_j} 
\end{equation}
where $X_i^*$ is the adjoint of $X_i$ with respect to Lebesgue measure $d\xi$ on $\bbG$. 

We recall the results in the setting of \cite{frank2024courant}. Let $\bbH_n$ be the $(2n+1)$-dimensional Heisenberg group, a $H$-type group according to the previous definition with $m = 1$ and $U^{(1)}$ a block diagonal matrix of the form
\[ U^{(1)} = \left(\begin{array}{cc} 0 & -\bbI_{n} \\ \bbI_{n} & 0 \\ \end{array}\right) \]
Consider the space $\bbH_n \times \bbR^k$, where $n \in \bbN$ and $k \in \bbN_0$, equipped with its sublaplacian 
\[ \Delta^{\bbH_n \times \bbR^k} = \Delta^{\bbH^n} \otimes \bbI^{\bbR^k} + \bbI^{\bbH_n} \otimes \Delta^{\bbR^k}. \] 
If $\Omega \sbs \bbH_n \times \bbR^k$ is a domain of finite measure, it was shown that $\smash{-\Delta_\Omega^{\bbH_n \times \bbR^k}}$, the selfadjoint Dirichlet realisation of $\smash{-\Delta^{\bbH_n \times \bbR^k}}$ on $\Omega$, has discrete spectrum and we denote by $(\lambda_\ell(\Omega))_{\ell \geq 1}$ its eigenvalues, which are arranged in nondecreasing order and counted with multiplicity, and by $(\varphi_\ell(\Omega))_{\ell \geq 1}$ its corresponding eigenfunctions.

If $f$ is continuous on $\Omega$, then its nodal set is 
\[ \cN(f) = \overline{\{x \in \Omega \mid f(x) = 0\}} \] and a nodal domain is a connected component of $\Omega \setminus \cN(f)$. Since it was also shown the eigenfunctions $\varphi_\ell(\Omega)$ are smooth in $\Omega$, the nodal domains are well defined and we denote by $\nu_\ell(\Omega)$ the number of nodal domains of $\varphi_\ell(\Omega)$. Then \cite[Theorems~7.1~and~9.1]{frank2024courant} assert
\begin{align}
   \limsup_{\ell \to \infty} \frac{\nu_\ell(\Omega)}{\ell} \leq \gamma(\bbH_n \times \bbR^k) \notag &= \cfk(\bbH_n \times \bbR^k)^{-Q/2}\cW(\bbH_n \times \bbR^k)^{-1} \\
   &\leq \csob(\bbH_n \times \bbR^k)^{-Q/2}\cW(\bbH_n \times \bbR^k)^{-1}, \label{gamma-bound}
\end{align}
where $\cfk(\bbH_n \times \bbR^k)$ and $\cW(\bbH_n \times \bbR^k)$ are the Faber-Krahn and Weyl constants respectively appearing in the lower bound on the first eigenvalue 
\begin{equation}\label{faber-krahn}
    \lambda_1(\Omega) \geq \cfk(\bbH_n \times \bbR^k) \abs{\Omega}^{-2/Q} 
\end{equation}
and the Weyl asymptotics for the eigenvalue counting function 
\begin{equation}\label{weyl-asymptotics}
    \lim_{\lambda \to \infty} N(\lambda, -\Delta_\Omega^{\bbH_n \times \bbR^k}) = \abs{\{\ell \in \bbN \mid \lambda_\ell(\Omega) < \lambda\}} \sim \cW(\bbH_n \times \bbR^k)\abs{\Omega} \lambda^{Q/2} 
\end{equation}
and where $Q = Q(n, k) = 2n + 2 + k$ is the homogeneous dimension of $\bbH_n \times \bbR^k$, and $\csob(\bbH_n \times \bbR^k)$ is the sharp constant in the $L^2$-Sobolev inequality 
\begin{equation}\label{sobolev-inequality}
    \int_{\bbH_n \times \bbR^k} (-\Delta^{\bbH_n \times \bbR^k}u)\bar{u}d\xi \geq \csob(\bbH_n \times \bbR^k) \left(\int_{\bbH_n \times \bbR^k} \abs{u}^{2Q/(Q - 2)}d\xi\right)^{(Q-2)/Q}
\end{equation}
valid for $u \in C_0^\infty(\bbH_n \times \bbR^k)$. 

On $\bbR^k$, the Courant theorem \cite{courant1923allgemeiner} asserts $\nu_\ell(\Omega) \leq \ell$, that is $\varphi_\ell$ has at most $\ell$ nodal domains, while the Pleijel theorem \cite[\S5]{pleijel1956remarks} asserts the existence of a constant $\gamma(\bbR^k)$ independent of $\Omega$ such that 
\[ \limsup_{\ell \to \infty} \frac{\nu_\ell(\Omega)}{\ell} \leq \gamma(\bbR^k) \]
and $\gamma(\bbR^k) < 1$ for $k \geq 2$, that is $\nu_\ell(\Omega) = \ell$ for finitely many $\ell$. 

It was shown in \cite[Theorem~7.2]{frank2024courant} that Pleijel's theorem $\gamma(\bbH_n \times \bbR^k) < 1$ holds unconditionally for all but four pairs of $(n, k) \in \bbN \times \bbN_0$ as a consequence of \eqref{gamma-bound} and the sharp constant in the $L^2$-Sobolev inequality for Heisenberg groups \cite[Corollary~C]{jerison1988extremals}, see also \cite[Theorem~2.1]{frank2012sharp}, and otherwise holds for all pairs of $(n, k)$ assuming the validity of the Pansu conjecture \cite{pansu1983isoperimetric} concerning the isoperimetric problem on the Heisenberg group and which gives a better bound than \eqref{gamma-bound} on $\cfk(\bbH_n \times \bbR^k)$, see \cite[Proposition~11.1]{frank2024courant}. The goal of this note is to modestly extend the former result to $H$-type groups using the recent result of Yang for the sharp constant in the $L^2$-Sobolev inequality \cite{yang2024optimal}. 

\begin{theorem}
    Let $\bbG \cong \bbR^{2n}_x \times \bbR^m_t$ be a $H$-type group. Then Pleijel's theorem $\gamma(\bbG) < 1$ holds for all but $(n, m) \in \{(1, 1), (2, 1), (3, 1), (2, 2)\}$.  
\end{theorem}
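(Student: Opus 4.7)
The plan is to adapt the Frank-Helffer proof of \cite[Theorem~7.2]{frank2024courant} to the $H$-type setting by first showing that the abstract bound
\[ \gamma(\bbG) \leq \csob(\bbG)^{-Q/2}\cW(\bbG)^{-1}, \qquad Q = 2n + 2m, \]
survives the passage from $\bbH_n \times \bbR^k$ to a general $H$-type group $\bbG$, then inserting explicit values of the two constants, and finally checking that the resulting expression is strictly less than $1$ outside the four listed pairs.

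First I would verify that \eqref{gamma-bound} extends to $H$-type groups. The proof in \cite{frank2024courant} rests on four ingredients that are intrinsic to any step-two stratified Lie group: hypoellipticity of $\Delta^\bbG$, which yields smoothness of eigenfunctions and hence well-defined nodal domains; the Weyl asymptotics \eqref{weyl-asymptotics} with constant $\cW(\bbG)$, from the standard heat-kernel expansion on $\bbG$; the Sobolev inequality \eqref{sobolev-inequality}; and the Faber-Krahn inequality \eqref{faber-krahn}, derived from Sobolev in a way that uses no Heisenberg-specific geometry. All carry over verbatim with homogeneous dimension $Q = 2n + 2m$, so the entire chain producing \eqref{gamma-bound} goes through.

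Next, I would insert the two explicit constants. For $\csob(\bbG)$ I would invoke Yang's recent identification \cite{yang2024optimal} of the sharp $L^2$-Sobolev constant on every $H$-type group, an explicit expression in $n$, $m$, and gamma functions. For $\cW(\bbG)$ I would compute $\Tr\, e^{t\Delta^\bbG}$ via the Plancherel formula on $\bbG$: Fourier transforming in the central variable $t \in \bbR^m$, the operator $-\Delta^\bbG$ becomes a family of harmonic oscillators on $\bbR^{2n}_x$ with frequencies $\abs{\tau}$ parametrised by $\tau \in \bbR^m$, whose spectrum is $\{(2\abs{\alpha}+n)\abs{\tau} : \alpha \in \bbN_0^n\}$. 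Integrating the resulting spectral density against the Plancherel measure $\abs{\tau}^n d\tau$ yields a closed form for $\cW(\bbG)$ depending only on $n$ and $m$, in particular recovering the Frank-Helffer formula when $m = 1$ and $\bbG \times \bbR^k = \bbH_n \times \bbR^k$.

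Finally, I would check the inequality $\csob(\bbG)^{-Q/2}\cW(\bbG)^{-1} < 1$ by combining the two formulas into a single ratio of gamma functions in $n$ and $m$. The expected monotonicity in each variable separately reduces the verification to finitely many cases, after which direct computation identifies the exceptional four. The main obstacle is this last bookkeeping step: one must pin down the cutoff sharply and confirm that $(1,1), (2,1), (3,1), (2,2)$ are precisely the pairs for which the combined constant fails to be $< 1$. As in \cite[Proposition~11.1]{frank2024courant}, a sharp isoperimetric improvement of the Faber-Krahn constant could conceivably eliminate these exceptions, but the present note should stop at the unconditional statement.
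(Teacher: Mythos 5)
Your overall architecture (abstract bound $\gamma(\bbG)\le\csob(\bbG)^{-Q/2}\cW(\bbG)^{-1}$, Yang's sharp Sobolev constant, a fibered computation of $\cW(\bbG)$, then a reduction to finitely many cases) is the same as the paper's. But the final step, which you describe as ``the expected monotonicity in each variable separately reduces the verification to finitely many cases, after which direct computation identifies the exceptional four,'' contains the real content of the proof and, as written, would fail. If you tabulate the combined constant $\tgamma{n,m}$ you find it exceeds $1$ not for four pairs but for seven: $(1,1),(1,2),(1,3),(1,4),(2,1),(2,2),(3,1)$. The theorem's exceptional set is only four because of an input your proposal never invokes: Kaplan's classification of $H$-type algebras (via Clifford modules / the existence of $m$ anticommuting skew-symmetric orthogonal matrices on $\bbR^{2n}$), which forces $m=1$ when $n\in\{1,3\}$ and $m\le 3$ when $n=2$. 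The pairs $(1,2),(1,3),(1,4)$ simply do not correspond to any $H$-type group, and without this admissibility constraint you cannot conclude the statement as formulated. Moreover, the monotonicity you take for granted is not available in the form you need: the paper proves $\tgamma{n,m}$ decreases in $n$ by a delicate term-by-term comparison of the series $c_{n,m}=\sum_k\binom{k+n-1}{k}(2k+n)^{-n-m}$, but explicitly cannot prove monotonicity of $\tgamma{n,m}$ in $m$; it must pass to the cruder majorant $\bar\gamma_{n,m}$ obtained by truncating $c_{n,m}$ at its first term, prove that \emph{this} decreases in $m$ (using Wendel's inequality for Gamma ratios), and then still check $(2,3)$ by hand. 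Your plan gives no indication of how either monotonicity would actually be established.

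A smaller but genuine gap is in the Weyl computation. For a general $H$-type group the partial Fourier transform in the central variables does \emph{not} produce a sum of $n$ decoupled Landau Hamiltonians with frequency $\abs{\tau}$, as it does for $\bbH_n$; one needs the fact that the fibered operator $-\Delta_\tau^{\bbG}$ is unitarily equivalent, via conjugation by an orthogonal map $T_\tau$ of $\bbR^{2n}_x$, to the Heisenberg model operator $-\Delta^{\bbH_n}_{\abs{\tau}}$, together with the observation that the on-diagonal densities of the eigenprojections are unchanged under this conjugation because they are constant on the diagonal. Your assertion that ``the operator becomes a family of harmonic oscillators with frequencies $\abs{\tau}$'' reaches the right spectrum but skips the justification that the local spectral density, not just the spectrum, is the same as in the Heisenberg case; that is what the formula for $\cW_{n,m}$ actually requires.
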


\begin{remark}
    If $m = 1$ then $\bbG$ is isomorphic to a Heisenberg group. Otherwise, see \cite[Example~18.1.3]{bonfiglioli2007stratified} for an example of a $H$-type group with $(n, m) = (2, 2)$. 
\end{remark}
\section{Proof of main result}

We consider the analogue of \eqref{gamma-bound} on a $H$-type group which reads 
\[ \limsup_{\ell \to \infty} \frac{\nu_\ell(\Omega)}{\ell} \leq \csob(\bbG)^{-Q/2}\cW(\bbG)^{-1} =\vcentcolon \tilde{\gamma}(\bbG) \] 
for $Q = Q(n, m) = 2n + 2m$ the homogeneous dimension of $\bbG$. The technical details, for instance concerning the well-definedness of the nodal domain count $\nu_\ell(\Omega)$, the existence of the Weyl constant $\cW(\bbG)$, or the validity of \eqref{gamma-bound}, follow mutatis mutandis from the discussion given in \cite{frank2024courant}. For brevity and since the results depend only on the dimensions $2n$ and $m$, in the sequel we shall always indicate this dependence in the subscript.

First, by \cite[Theorem~1.2]{yang2024optimal}, we have 
\begin{equation}\label{Htype-sobolev-inequality}
    \csob_{n, m} = 4^{n/(n + m)}n(n + m - 1)\pi^{(2n + m)/(2n + 2m)}\left(\frac{\Gamma(n + m/2)}{\Gamma(2n + m)}\right)^{1/(n + m)}.
\end{equation}
Note a $H$-type group in \cite{yang2024optimal} was defined with $m = \dim(\mf{z}^\perp)$ and $n = \dim(\mf{z})$, so \eqref{Htype-sobolev-inequality} comes from replacing $m$ and $n$ with $2n$ and $m$ respectively in their Sobolev constant. Second, to compute the Weyl constant, we follow the diagonalisation procedure in \cite{hansson2008sharp, frank2024courant}.

\begin{proposition}
    Let $\bbG \cong \bbR^{2n}_x \times \bbR^m_t$ be a $H$-type group. Let 
    \begin{equation}\label{laptevC}
        c_{n, m} = \sum_{k \geq 0} \binom{k + n - 1}{k} \frac{1}{(2k + n)^{n+m}}.
    \end{equation}
    Then 
    \begin{equation}\label{htype-weyl-asymptotics}
        \cW_{n, m} = \frac{\omega_{m-1}}{(2\pi)^{n+m}}\frac{1}{n+m}c_{n, m}
    \end{equation}
    where $\omega_{m-1}$ is the surface volume of the $(m-1)$-dimensional sphere $\bbS^{m-1}$. In particular, 
    \begin{equation}\label{htype-gamma-bound}
        \limsup_{\ell \to \infty} \frac{\nu_\ell(\Omega)}{\ell} \leq \tgamma{n, m} \vcentcolon= \frac{1}{2^{n-m+1}}\frac{n+m}{n^{n+m}(n+m-1)^{n+m}} \frac{\Gamma(m/2)\Gamma(2n+m)}{\Gamma(n+m/2)}\frac{1}{c_{n, m}}. 
    \end{equation}
\end{proposition}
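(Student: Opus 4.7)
The plan is to adapt the diagonalisation procedure of \cite{hansson2008sharp, frank2024courant} to the $H$-type setting. The new input is that the anticommuting property of the $U^{(j)}$ turns each partial-Fourier fibre of the sublaplacian into a genuine uniform-field Landau Hamiltonian, which lets us close the heat kernel into a clean Mehler form; the rest of the argument is a bookkeeping exercise parallel to the Heisenberg computation, followed by elementary algebra for \eqref{htype-gamma-bound}.

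First I would apply the partial Fourier transform in the central variable $t$ to \eqref{htype-sublaplacian}, obtaining a measurable family $\{H(\tau)\}_{\tau \in \bbR^m}$ of operators on $L^2(\bbR^{2n}_x)$ given by
\[ H(\tau) = -\Delta_x + \tfrac{1}{4}|\tau|^2|x|^2 - i\langle U(\tau)x, \nabla_x\rangle, \qquad U(\tau) \vcentcolon= \sum_{j=1}^m \tau_j U^{(j)}, \]
which one recognises as the magnetic Laplacian $(-i\nabla_x + \tfrac{1}{2}U(\tau)x)^2$. Skew-symmetry, orthogonality, and anticommutativity of the $U^{(j)}$ force $U(\tau)^2 = -|\tau|^2\bbI_{2n}$, so $U(\tau)/|\tau|$ is a compatible complex structure on $\bbR^{2n}$; after a symplectic change of variables $H(\tau)$ splits into $n$ independent two-dimensional Landau Hamiltonians of field strength $|\tau|$. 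Its spectrum is therefore the Landau ladder $\{(2k+n)|\tau| : k \in \bbN_0\}$ with degeneracy per unit area $\binom{k+n-1}{k}(|\tau|/2\pi)^n$, so the heat kernel at the origin reads
\[ e^{-sH(\tau)}(0,0) = \frac{|\tau|^n}{(2\pi)^n}\sum_{k \geq 0}\binom{k+n-1}{k}e^{-s(2k+n)|\tau|} = \frac{|\tau|^n}{(4\pi\sinh(s|\tau|))^n}, \]
using the identity $(2\sinh u)^{-n} = \sum_{k \geq 0}\binom{k+n-1}{k}e^{-(2k+n)u}$.

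Next I would invert the Fourier transform, giving $K_s^\bbG(0,0) = (2\pi)^{-m}\int_{\bbR^m}e^{-sH(\tau)}(0,0)\,d\tau$. Polar coordinates in $\tau$, the substitution $u = s|\tau|$, and termwise integration via $\int_0^\infty u^{n+m-1}e^{-(2k+n)u}\,du = \Gamma(n+m)(2k+n)^{-(n+m)}$ lead to
\[ K_s^\bbG(0,0) = \frac{\omega_{m-1}\,\Gamma(n+m)}{(2\pi)^{n+m}}\,c_{n,m}\,s^{-(n+m)}. \]
Left-invariance makes $K_s^\bbG(\xi,\xi)$ independent of $\xi$, and a standard Dirichlet-Neumann bracketing (carried out in \cite{hansson2008sharp, frank2024courant} for the Heisenberg case and going through here without modification) produces $\sum_\ell e^{-s\lambda_\ell(\Omega)} \sim |\Omega|\,K_s^\bbG(0,0)$. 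Karamata's Tauberian theorem at exponent $Q/2 = n+m$ then delivers \eqref{htype-weyl-asymptotics}. The bound \eqref{htype-gamma-bound} is obtained by substituting \eqref{Htype-sobolev-inequality} and \eqref{htype-weyl-asymptotics} into $\tgamma{n,m} = \csob_{n,m}^{-Q/2}\cW_{n,m}^{-1}$ and using $\omega_{m-1} = 2\pi^{m/2}/\Gamma(m/2)$; the powers of $2$ and $\pi$ cancel to leave only the stated gamma-function and $c_{n,m}$ factors.

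The main technical point is the spectral decomposition of $H(\tau)$: it is the $H$-type anticommuting hypothesis that makes $U(\tau)^2$ a negative scalar multiple of the identity, producing the uniform Landau-level spectrum with degeneracies $\binom{k+n-1}{k}$. Without it one would face variable frequencies tied to the singular values of $U(\tau)$, and neither the fibre heat kernel nor the final $\tau$-integral would collapse into the announced form. Once that fibrewise diagonalisation is in hand, the bracketing, Karamata, and algebraic steps are entirely routine.
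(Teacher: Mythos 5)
Your proposal is correct and follows essentially the same route as the paper: partial Fourier transform in the central variables, identification of each $\tau$-fibre with a field-strength-$\abs{\tau}$ Landau Hamiltonian on $\bbR^{2n}$ contributing $(2\pi)^{-n}\abs{\tau}^n\binom{k+n-1}{k}$ per level, and integration in polar coordinates over $\tau$. The only differences are cosmetic: you reach the Weyl constant via the heat trace and Karamata's theorem where the paper computes the on-diagonal density of the spectral projection $\mathbbm{1}(-\Delta^{\bbG}<\lambda)$ directly, and you prove the fibrewise unitary equivalence by hand from $U(\tau)^2=-\abs{\tau}^2\bbI_{2n}$ where the paper cites \cite[Lemma~2.1]{niedorf2024p}.
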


\begin{proof}
    Reasoning as in \cite[Equation~8.2]{frank2024courant}, we want to show 
    \[ \mathbbm{1}(-\Delta^{\bbG} < \lambda)((x, t), (x, t)) = \cW_{n, m}\lambda^{n+m} \]
    for some $\cW_{n, m} > 0$. By taking the partial Fourier transform in the central variables $t \in \bbR^m_t$ with $\tau$ the Fourier variable dual to $t$, we see that while $\cF_t(-\Delta^\bbG) \cF_t^* \vcentcolon = -\Delta_\tau^\bbG$ is \emph{not} a sum of $n$ decoupled Landau hamiltonians  
    \[ -\Delta_{\abs{\tau}}^{\bbH_n} = -\sum_{j=1}^n \left(\partial_{x_j} + \frac{i}{2}x_{j+n}\abs{\tau}\right)^2 + \left(\partial_{x_{j+n}} - \frac{i}{2}x_j\abs{\tau}\right)^2 \]
    for $\tau \neq 0$, as in the case of the Heisenberg group, they \emph{are} unitarily equivalent, that is there exists an orthogonal matrix $T_\tau \in O(\bbR^{2n}_x)$ such that $\smash{-\Delta_\tau^\bbG = T_\tau \circ (-\Delta_{\abs{\tau}}^{\bbH_n}) \circ T_\tau^{-1}}$, see for instance \cite[Lemma~2.1]{niedorf2024p}. Consequently their spectra agree, and since the densities of the eigenprojections of a single Landau hamiltonian are constant on the diagonal, the densities of the $T_\tau$-conjugated eigenprojections are equal on the diagonal to that same constant and hence each $\tau$-fibre once again contributes $(2\pi)^{-n}\abs{\tau}^n$ to the on-diagonal spectral density. Then taking the inverse partial Fourier transform, we obtain
    \begin{align*}
        \mathbbm{1}(-\Delta^\bbG < \lambda)((x, t), (x, t)) &= \int_{\bbR^m} \frac{d\tau}{(2\pi)^m} \frac{\abs{\tau}^n}{(2\pi)^n} \sum_{k \in \bbN_0^n} \mathbbm{1}(\abs{\tau}(2\abs{k} + n) < \lambda) \\ 
        &=  \frac{\omega_{m-1}}{(2\pi)^{n+m}} \int_0^\infty \tau^{n+m-1} \sum_{k \in \bbN_0^n} \mathbbm{1}(\tau(2\abs{k} + n) < \lambda) d\tau \\
        &= \frac{\omega_{m-1}}{(2\pi)^{n+m}}\frac{1}{n+m} \sum_{k \in \bbN_0^n} \left(\frac{\lambda}{2(\abs{k} + n)}\right)^{n+m} \\
        &= \frac{\omega_{m-1}}{(2\pi)^{n+m}}\frac{\lambda^{n+m}}{n+m} \sum_{k \geq 0} \binom{k+n-1}{k}\frac{1}{(2k + n)^{n+m}}. 
    \end{align*}
    Note that if $m = 1$ then \eqref{htype-weyl-asymptotics} recovers \cite[Equation~8.3]{frank2024courant} up to a factor of $4$, which is due to the choice of normalisation used in their definition of the Heisenberg group, and \cite[Equation~9.2]{frank2024courant}, which is independent of the normalisation.  
\end{proof} 

We now show $\tgamma{n, m} < 1$ for all but finitely many pairs of $(n, m) \in \bbN \times \bbN$. In fact it suffices to consider a strictly smaller subset since it turns out that there are constraints on $n$ and $m$, characterised by \cite[Corollary~1]{kaplan1980fundamental}, in the sense there are no $H$-type groups for some $(n, m)$. In particular, if $n \in \{1, 3\}$ then necessarily $m = 1$, and if $n = 2$ then $m \leq 3$. These are precisely the four exceptions given in the statement of the theorem, excluding the specific case of $(n, m) = (2, 3)$ which we treat separately. Note if $n$ is odd then the centre is always $1$-dimensional and by \cite[Remark~18.2.6]{bonfiglioli2007stratified} it follows $\bbG \cong \bbH_n$. 

We first prove $\tgamma{n, m}$ decreases with $n \geq 1$ for all $m \geq 1$. Consider the quotient for $n \geq 2$
\begin{equation}\label{psi-ratio}
    \varphi_{n, m} \vcentcolon= \frac{\tgamma{n, m}}{\tgamma{n-1, m}} = \frac{(n-1)^{n+m-1}(n+m-2)^{n+m-1}(n+m)(2n+m-1)}{n^{n+m}(n+m-1)^{n+m+1}} \frac{c_{n-1, m}}{c_{n, m}}. 
\end{equation}
In order to prove a lower bound for $c_{n, m}/c_{n-1, m}$, we study the quotient
\begin{equation}\label{kth-term-quotient}
    \frac{\binom{k+n-1}{k}(2k+n)^{-n-m}}{\binom{k+n-2}{k}(2k+n-1)^{-n-m+1}} = \frac{1}{n-1}\frac{k+n-1}{2k+n}\left(1 - \frac{1}{2k+n}\right)^{n+m-1}
\end{equation}
of the $k$-th term in the numerator to the $k$-th term in the denominator. The derivative with respect to $k$ of the right hand side is $1/(n-1)$ multiplied by 
\[ \frac{d}{dk}\left(\frac{k+n-1}{2k+n}\left(1 - \frac{1}{2k+n}\right)^{n+m-1}\right) = \frac{2k(m+1)+(n-1)(n+2m)}{(2k+n-1)^2(2k+n)}\left(1 - \frac{1}{2k+n}\right)^{n+m} \]
which is positive. The right hand side of \eqref{kth-term-quotient} therefore minimises at $k = 0$ and hence
\[ \frac{c_{n, m}}{c_{n-1, m}} \geq \frac{1}{n}\left(1 - \frac{1}{n}\right)^{n+m-1}. \] 
Inserting this into \eqref{psi-ratio} reduces the problem to showing
\[ \varphi_{n, m} \leq \frac{(n+m-2)^{n+m-1}(n+m)(2n+m-1)}{(n+m-1)^{n+m+1}} \leq \frac{1}{e} \frac{(n+m)(2n+m-1)}{(n+m-1)^2} < 1. \]
It suffices to bound the fraction from above by $\frac{5}{2}$ since
\begin{align*}
    (n+m)(2n + m-1) - \tfrac{5}{2}(n + m -1)^2 &= -\tfrac{3}{2}m^2 - 2(n-2)m - \tfrac{1}{2}(n^2 - 8n + 5) \\
    &\leq -\tfrac{3}{2}m^2 - 4m + \tfrac{11}{2} 
\end{align*}
is nonpositive for $m \geq 1$ so $\varphi_{n, m} \leq \frac{5}{2e} < 1$ and hence $\tgamma{n, m}$ is monotonic in $n$ for each $m$. 

On the other hand, although the numerics (see appendix) appear to support the hypothesis $\tgamma{n, m}$ also decreases with respect to $m$ uniform in $n$, it is not clear how to generalise the previous proof since the analogue of \eqref{kth-term-quotient}, that is the quotient of the $k$-th term in $c_{n, m}$ and $c_{n, m-1}$ respectively, is $1/(2k+n)$ which is not bounded below. 

However, if we first bound $c_{n, m}$ from below by truncating at the first term $n^{-(n+m)}$ in the series so that  
\[ \tgamma{n, m} \leq \bar{\gamma}_{n, m} \vcentcolon= \frac{1}{2^{n-m+1}}\frac{n+m}{(n+m-1)^{n+m}}\frac{\Gamma(m/2)\Gamma(2n+m)}{\Gamma(n+m/2)}, \]
we can prove $\bar{\gamma}_{n, m}$ decreases with $m \geq 1$ for all $n \geq 1$. Consider the quotients for $m \geq 2$ 
\begin{align}
    \psi_{n, m} \vcentcolon= \frac{\bar{\gamma}_{n, m}}{\bar{\gamma}_{n, m-1}} &= 4\frac{(n+m-2)^{n+m-1}(n+m)}{(n+m-1)^{n+m+1}} \frac{\Gamma(m/2)\Gamma(n+m/2+1/2)}{\Gamma(m/2-1/2)\Gamma(n+m/2)} \notag \\
    &\leq \frac{4}{e} \frac{n+m}{(n+m-1)^2}\frac{\Gamma(m/2)\Gamma(n+m/2+1/2)}{\Gamma(m/2-1/2)\Gamma(n+m/2)}. \label{psi-ratio}
\end{align}
We distinguish the particular case $n = 1$ where \eqref{psi-ratio} reads 
\[ \psi_{1, m} \leq \frac{4}{e}\frac{m+1}{m^2}\frac{(m-1)(m+1)}{2m} = \frac{2}{e}\frac{(m-1)(m+1)^2}{m^3}. \]
The derivative of the right hand side is zero at $m = 3$ where the second derivative is negative so $\psi_{1, m} \leq \frac{64}{27e} < 1$. Otherwise for $n \geq 2$ and replacing $m$ with $m + 1$ for convenience, by Wendel's inequality
\[ \psi_{n, m+1} \leq \frac{4}{e}\frac{n+m+1}{(n+m)^2}\sqrt{m/2}\sqrt{n+m/2+1/2}. \]
Squaring and performing a change of variable $\ell = n + m \geq 2 + m \geq 3$ we find 
\begin{align*}
    \psi_{n, m+1}^2 \leq \frac{4}{e^2} \frac{m(n+m+1)^2(2n+m+1)}{(n+m)^4} &= \frac{4}{e^2}\frac{m(\ell + 1)^2(2\ell - m + 1)}{\ell^4} \\
    &= \frac{4}{e^2}\frac{(\ell+1)^2}{\ell^4}(-m^2+(2\ell+1)m).
\end{align*}
For each fixed $\ell$ the parabola $-m^2 + (2\ell + 1)m$ is increasing on $m \in (0, \ell + \frac{1}{2})$. But since $m \leq \ell - 2$, it maximises at this boundary and we obtain
\[ \psi_{n, m+1}^2 \leq \frac{4}{e^2}\frac{(\ell+1)^2(\ell^2+\ell-6)}{\ell^4} \leq \frac{4}{e^2}\left(1 + \frac{3}{\ell} - \frac{3}{\ell^2}\right). \]
The derivative of the right hand side is negative for $\ell \geq 3$ so $\psi_{n, m+1}^2 \leq \frac{20}{3e^2} < 1$ and hence $\bar{\gamma}_{n, m}$ is monotonic in $m$ for each $n$. 

Combining the two monotonicity arguments yields for $n \geq 4$ and $m \geq 2$:
\[ \tgamma{n, m} \leq \tgamma{4, m} \leq \overline{\gamma}_{4, m} \leq \overline{\gamma}_{4, 2} = \frac{2268}{3125} \approx 0.7258 < 1 \]
while the case $m = 1$ is the content of \cite[Theorem~7.2]{frank2024courant}. All that remains is to also verify $\tgamma{2, 3} < 1$ and indeed $\tgamma{2, 3} \leq \overline{\gamma}_{2, 3} = \frac{15}{16} = 0.9375 < 1$. 

In light of the previous results of Frank and Helffer for $\bbH_n \times \bbR^k$ and the present results for $H$-type groups $\bbG_{n, m} \cong \bbR^{2n}_x \times \bbR^m_t$, it seems reasonable to expect the Pleijel theorem (at least when deduced via the $L^2$-Sobolev constant, not the Faber-Krahn constant) holds also for $\bbG_{n, m} \times \bbR^k$ with the exceptions depending only on $n$, $m$, and $k$, or better yet perhaps only on their (homogeneous) sum. Another possible direction may be extension to the larger class of M\'etivier groups where the sharp $L^2$-Sobolev constant is not known and the spectral decomposition of the sublaplacian is more complicated, see for instance \cite[\S2]{martini2014spectral}. 

\begin{acknowledgements}
\textup{We are grateful to Rupert Frank and Bernard Helffer for valuable discussions which helped improve the monotonicity argument and greatly reduced the number of cases needing manual verification. This project has received funding from the European Union’s Horizon 2020 research and innovation programme under the Marie Skłodowska-Curie grant agreement No 101034255. \scalerel*{\includegraphics{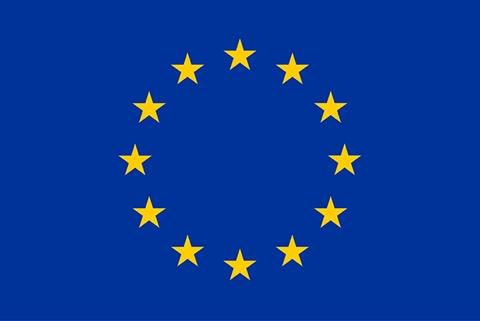}}{A}}
\end{acknowledgements}

\section*{Appendix}
For the reader's convenience we collect some numeric computations of $\tgamma{n, m}$ and $\overline{\gamma}_{n, m}$. Rows correspond to fixed $n$, and columns to fixed $m$. Grey shading indicates inadmissible pairs of $(n, m)$ while red highlighting indicates values greater than $1$. Note in the first figure the first column agrees with the numerics found in \cite[p.~44]{frank2024courant}. 

\small
\begin{figure}
    \centering
    \begin{NiceTabular}{|c|c|c|c|c|c|c|c|c|c|c|}
    \CodeBefore
    \cellcolor{gray!50}{
    2-3,2-4,2-5,2-6,2-7,2-8,2-9,2-10,2-11,
    3-5,3-6,3-7,3-8,3-9,3-10,3-11,
    4-3,4-4,4-5,4-6,4-7,4-8,4-9,4-10,4-11,
    5-9,5-10,5-11,
    6-3,6-4,6-5,6-6,6-7,6-8,6-9,6-10,6-11,
    7-5,7-6,7-7,7-8,7-9,7-10,7-11,
    8-3,8-4,8-5,8-6,8-7,8-8,8-9,8-10,8-11,
    9-10,9-11,
    10-3,10-4,10-5,10-6,10-7,10-8,10-9,10-10,10-11,
    11-5,11-6,11-7,11-8,11-9,11-10,11-11}
    \Body
    \hline
        $n/m$ & 1 & 2 & 3 & 4 & 5 & 6 & 7 & 8 & 9 & 10 \\ \hline
        1 & \textcolor{red}{3.2423} & 2.1392 & 1.5574 & 1.1666 & 0.8835 & 0.6718 & 0.5115 & 0.3893 & 0.2960 & 0.2248 \\ \hline
        2 & \textcolor{red}{1.8238} & \textcolor{red}{1.2325} & 0.8662 & 0.6221 & 0.4530 & 0.3329 & 0.2462 & 0.1828 & 0.1361 & 0.1015 \\ \hline
        3 & \textcolor{red}{1.0689} & 0.7141 & 0.4892 & 0.3413 & 0.2414 & 0.1726 & 0.1244 & 0.0903 & 0.0659 & 0.0482 \\ \hline 
        4 & 0.6249 & 0.4120 & 0.2771 & 0.1893 & 0.1310 & 0.0917 & 0.0647 & 0.0461 & 0.0330 & 0.0237 \\ \hline
        5 & 0.3626 & 0.2365 & 0.1568 & 0.1054 & 0.0718 & 0.0494 & 0.0343 & 0.0240 & 0.0169 & 0.0120 \\ \hline 
        6 & 0.2089 & 0.1350 & 0.0885 & 0.0588 & 0.0395 & 0.0268 & 0.0184 & 0.0127 & 0.0088 & 0.0062 \\ \hline 
        7 & 0.1196 & 0.0767 & 0.0499 & 0.0328 & 0.0218 & 0.0146 & 0.0099 & 0.0068 & 0.0046 & 0.0032 \\ \hline
        8 & 0.0681 & 0.0434 & 0.0280 & 0.0183 & 0.0120 & 0.0080 & 0.0054 & 0.0036 & 0.0025 & 0.0017 \\ \hline
        9 & 0.0386 & 0.0245 & 0.0157 & 0.0102 & 0.0067 & 0.0044 & 0.0029 & 0.0020 & 0.0013 & 0.0010 \\ \hline
        10 & 0.0218 & 0.0138 & 0.0088 & 0.0057 & 0.0037 & 0.0024 & 0.0016 & 0.0011 & 0.0007 & 0.0005 \\ \hline
    \end{NiceTabular}
    \caption{Values of $\tilde{\gamma}_{n, m}$ for $1 \leq n, m \leq 10$.}
\end{figure}
\normalsize

\small
\begin{figure}
    \centering
    \begin{NiceTabular}{|c|c|c|c|c|c|c|c|c|c|c|}
    \CodeBefore
    \cellcolor{gray!50}{
    2-3,2-4,2-5,2-6,2-7,2-8,2-9,2-10,2-11,
    3-5,3-6,3-7,3-8,3-9,3-10,3-11,
    4-3,4-4,4-5,4-6,4-7,4-8,4-9,4-10,4-11,
    5-9,5-10,5-11,
    6-3,6-4,6-5,6-6,6-7,6-8,6-9,6-10,6-11,
    7-5,7-6,7-7,7-8,7-9,7-10,7-11,
    8-3,8-4,8-5,8-6,8-7,8-8,8-9,8-10,8-11,
    9-10,9-11,
    10-3,10-4,10-5,10-6,10-7,10-8,10-9,10-10,10-11,
    11-5,11-6,11-7,11-8,11-9,11-10,11-11}
    \Body
    \hline
        $n/m$ & 1 & 2 & 3 & 4 & 5 & 6 & 7 & 8 & 9 & 10 \\ \hline
        1 & \textcolor{red}{4.0000} & 2.2500 & 1.5803 & 1.1719 & 0.8847 & 0.6722 & 0.5116 & 0.3893 & 0.2960 & 0.2248 \\ \hline
        2 & \textcolor{red}{3.0000} & \textcolor{red}{1.4815} & 0.9375 & 0.6451 & 0.4609 & 0.3357 & 0.2472 & 0.1832 & 0.1362 & 0.1016 \\ \hline
        3 & \textcolor{red}{2.3704} & 1.0254 & 0.5898 & 0.3781 & 0.2558 & 0.1784 & 0.1269 & 0.0913 & 0.0663 & 0.0484 \\ \hline
        4 & \textcolor{red}{1.8750} & 0.7258 & 0.3841 & 0.2308 & 0.1483 & 0.0992 & 0.0681 & 0.0476 & 0.0337 & 0.0241 \\ \hline
        5 & \textcolor{red}{1.4746} & 0.5199 & 0.2558 & 0.1450 & 0.0888 & 0.0571 & 0.0379 & 0.0257 & 0.0178 & 0.0124 \\ \hline
        6 & \textcolor{red}{1.1523} & 0.3751 & 0.1730 & 0.0930 & 0.0545 & 0.0337 & 0.0217 & 0.0143 & 0.0096 & 0.0066 \\ \hline
        7 & 0.8953 & 0.2718 & 0.1184 & 0.0606 & 0.0341 & 0.0204 & 0.0127 & 0.0081 & 0.0054 & 0.0036 \\ \hline
        8 & 0.6921 & 0.1977 & 0.0817 & 0.0401 & 0.0217 & 0.0125 & 0.0076 & 0.0047 & 0.0030 & 0.0020 \\ \hline
        9 & 0.5329 & 0.1440 & 0.0568 & 0.0267 & 0.0140 & 0.0078 & 0.0046 & 0.0028 & 0.0018 & 0.0011 \\ \hline
        10 & 0.4087 & 0.1051 & 0.0397 & 0.0180 & 0.0091 & 0.0049 & 0.0028 & 0.0017 & 0.0010 & 0.0006 \\ \hline
    \end{NiceTabular}
    \caption{Values of $\overline{\gamma}_{n, m}$ for $1 \leq n, m \leq 10$.}
\end{figure}
\normalsize

\printbibliography

\end{document}